\def\emph#1{\textbf{\textit{#1}}}
\newtheorem{thm}{Theorem}
\newtheorem{lem}[thm]{Lemma}
\def\lemref#1{Lemma \ref{lem:#1}}
\def\figref#1{Figure \ref{fig:#1}}
\def\figs#1#2#3{%
 \begin{figure}[tp]%
  \centerline{\includegraphics[scale=#3]{#1}}%
  \caption{#2}%
  \label{fig:#1}%
 \end{figure}
 }
\def\real{\mathbb{R}}
\def\R2{\real^2}
\def\.{\ldots}
\def\2g{two-sided component}
\def\3g{three-sided component}
\def\4g{four-sided component}
\def\5g{five-sided~component}
\def\6g{six-sided component}
\def\deg{^\circ}
\def\120{$120\deg$}
\def\90{$90\deg$}
\def\deg{^\circ}
\def\30{$30\deg$}
\def\60{$60\deg$}
\def\a{\angle}
\begin{document}

\title{Wetzel's sector covers unit arcs
}


\author{Chatchawan Panraksa \and  Wacharin Wichiramala 
}


\institute{Chatchawan Panraksa \at
              Applied Mathematics Program, Science Division, Mahidol University International College, 999 Phutthamonthon 4 Road, Salaya, Nakhonpathom, Thailand 73170 \\
              \email{chatchawan.pan@mahidol.edu}           
           \and
           Wacharin Wichiramala \at
              Department of Mathematics and Computer Science, Faculty of Science, Chulalongkorn University, 254 Phayathai Road, Pathumwan, Bangkok, Thailand 10330 \\
              \email{wacharin.w@chula.ac.th}
}
\date{}

\maketitle

\begin{abstract}
We settle J. Wetzel's 1970's conjecture and show that a \30
circular sector of unit radius can accommodate every planar arc of unit
length. \ Leo Moser asked in 1966 for the smallest (convex) region in the
plane that can accommodate each arc of unit length. \ With area $\pi/12,$ this
sector is the smallest such set presently known. \ Moser's question has
prompted a multitude of papers on related problems over the past 50 years,
most remaining unanswered.
\keywords{covering of unit arcs \and covering by convex sets \and worm problem \and sectorial covers \and planar arc \and support line}
 \subclass{52A38}
\end{abstract}

\section{Introduction}
\label{intro}
On a list \cite[Problem 9]{Leo Moser 1966} of 50 combinatorial geometry
problems circulated in 1966, L. Moser asked for the smallest set in the plane
that is large enough to accommodate a congruent copy of each arc of unit
length. \ (The list and much related material and commentary are included in
W. O. J. Moser \cite[p. 211, pp. 218-19]{W O J Moser 1991}.) \ This difficult
question, still unanswered, has become the model for a family of arc-covering
questions known collectively as \textquotedblleft worm
problems.\textquotedblright\ \ Many articles have been published on such
questions, but very few specific problems have been completely solved.

Let $\mathcal{F}$ be the set of all arcs of unit length in the plane. \ A
\textit{cover} of $\mathcal{F}$ is a set that contains a congruent copy of every
arc in $\mathcal{F}.$ \ Soon after Moser asked his question, A. Meir showed
that a semidisk of unit diameter is such a cover (reported in \cite{Wetzel
1973}); and in 1973 Wetzel \cite{Wetzel 1973} showed more generally that for
$0%
{{}^\circ}%
<\vartheta\leq90%
{{}^\circ}%
$ a circular sector of angle $\vartheta$ and\ radius $\frac{1}{2}\csc\frac
{1}{2}\vartheta$ is also a cover. \ At about this same time, Wetzel guessed
that the $30%
{{}^\circ}%
$ sector $\Pi$ of unit radius might also be a cover, but although he mentioned
this hunch to colleagues, he inexplicably neglected to mention it in
\cite{Wetzel 1973}. \ It was first reported in \cite{Norwood et al 1992} and
more recently in \cite[p. 358]{Wetzel 2003}.

The relevant literature on related covering problems is sizeable, but not much has been published about Moser's specific question. \ An argument is given in \cite{Wetzel-Wichiramala 2018} to show that the sector with central angle $\vartheta$ and radius $\csc2\vartheta$ is a cover, so in
particular, a $30%
{{}^\circ}%
$ sector of radius $2/\sqrt{3}\approx1.155$ is large enough to cover
$\mathcal{F}$. \ It is known \cite{Movshovich-Wetzel 2017} that at least every \textit{convex} (and consequently every \textit{drapeable}) unit arc fits in $\Pi$.

The best bounds in the literature for the least area $\alpha$ of a convex cover for $\mathcal{F}$ are $0.2322<\alpha<0.2709,$ as reported in \cite{Wetzel 2013}. \ With area
$\pi/12\approx0.2618,$ the $30%
{{}^\circ}%
$ unit sector $\Pi$ reduces the upper bound by a little more than 3 percent.

Here we give a geometric proof of Wetzel's conjecture. \ The shape of $\Pi$
suggests that reflections might be helpful, and our proof relies heavily on
this insight. \ But in one of those strange coincidences that sometimes occur,
Y. Movshovich has recently found an altogether different analytic proof of the result
using calculus-level tools. \ Her argument has not
yet been published.

\section{Preliminaries}
\label{sec:2}
In \cite[Cor. 5]{Wetzel-Wichiramala 2010} we showed in detail that a compact
set in the plane is a cover for $\mathcal{F}$ if it contains a congruent copy
of each simple polygonal arc of unit length. 
We first mention the key lemma on how a pair of support lines touches a simple arc. The parallel case is discussed in \cite{Coulton-Movshovich 2006} and the general case is in \cite{Wetzel-Wichiramala 2018}. Here we give another proof for the general case.
This is called the \textit{lambda property}. Our proof of the following lemma is related to reasoning given by J. Ralph Alexander in our unpublished paper, Alexander, Wetzel, and Wichiramala, ``The $\Lambda$-property of a simple arc." \cite{aww}.
The proof of the next lemma will appear after the main theorem.
\begin{lem}
Suppose $\gamma$ is a parametrized, simple, polygonal arc and $0<\theta<\pi$.
Let $X_1, X_2, ...,X_n$ be the corner points of the convex hull of $\gamma$ and appear on $\gamma$ in this parametric order .
Then $\gamma$ can be placed within two angles of size $\theta$ so that

1) $X_1$ touches an angle's vertex or $\gamma$ touches the rays at 3 points with parameters
$t_1<t_2<t_3$ where $\gamma(t_2)$ touches one ray and the other two points
touch the other ray.

2) $X_n$ touches the other angle's vertex or $\gamma$ touches the rays at another set of 3 alternating points. 

In total, there are at most 2 pairs of support lines with sets of such triple points. Furthermore when there are 2 pairs of support lines, they interlace as illustrated in \figref{Fig4} or \figref{Fig5}.
\end{lem}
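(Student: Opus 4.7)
The plan is to reduce the problem to a rotational sweep of circumscribed wedges of angle $\theta$, tracking how their contact points travel along the convex hull $K=\mathrm{conv}(\gamma)$. For each direction $\phi\in\mathbb{R}/2\pi\mathbb{Z}$ I will define $W(\phi)$ to be the unique wedge of opening $\theta$ oriented by $\phi$, whose two rays are support lines of $K$ with $K$ contained between them. Let $p(\phi)$ and $q(\phi)$ denote the contact loci of $W(\phi)$ with $\partial K$; these are single points for generic $\phi$ and become hull chords $[X_i,X_j]$ exactly when a ray of $W(\phi)$ aligns with such a chord.

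Each hull corner $X_i$ is a point of $\gamma$ with parameter $t(X_i)$. Assigning to each contact locus the parameter of the corresponding point yields two piecewise-continuous functions $t_p(\phi)$ and $t_q(\phi)$. Their discontinuities occur precisely at those $\phi$-values where a ray pivots past a hull chord $[X_i,X_j]$: at such a moment $t_p$ (or $t_q$) jumps from $t(X_i)$ to $t(X_j)$. I will argue that whenever such a jump skips over the current value of the other ray's parameter, the transition is exactly a three-point alternating contact, giving a correspondence between triple-contact wedges and reversals of the parameter ordering of $(t_p,t_q)$.

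To count these reversals I track the sign of $t_p(\phi)-t_q(\phi)$ as $\phi$ completes a full revolution. Because $\gamma$ is a simple arc and not a closed curve, its two endpoints partition $\partial K$ into two paths, and a topological argument on how $p$ and $q$ traverse $\partial K$ forces the sign of $t_p-t_q$ to reverse exactly twice. This yields at most two triple-contact wedges, and they pair naturally with $X_1$ and $X_n$: one reversal occurs as the rays pass the parametric start of $\gamma$, producing either a triple near $X_1$ or, if the exterior angle at $X_1$ accommodates $\theta$, an $X_1$-at-vertex configuration; the symmetric statement holds at $X_n$. Interlacing of the two triples, when both occur, then follows because $p(\phi)$ and $q(\phi)$ traverse $\partial K$ in opposite cyclic directions between the two reversals, forcing the triples' parameter values to interleave on $\gamma$.

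The hard part will be formalizing the sign-change count: showing that the data at $\gamma$'s endpoints really does restrict the oscillation of $t_p-t_q$ to exactly two sign changes, no more and no fewer, rather than simply bounding it topologically. Secondary obstacles include handling degeneracies when $\theta$ equals or exceeds an exterior angle at $X_1$ or $X_n$ (where triple and vertex configurations may coalesce), and when a pivoting ray sweeps across multiple chords simultaneously or pivots about the wedge vertex itself.
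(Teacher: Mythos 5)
Your sweep framework is sound as far as it goes, and it is in essence a continuous repackaging of the paper's own argument, which rotates the two rays $u$ and $v$ from a configuration anchored at $X_1$ until they meet at $X_n$; your dictionary (a jump of $t_p$ across a hull chord that skips the current value of $t_q$ corresponds to an alternating triple, and a coincidence $p=q$ at a corner whose hull angle is at most $\theta$ corresponds to a vertex configuration) is correct. But the step you defer as ``the hard part'' is not a technicality to be formalized later --- it is the entire content of the lemma. That $t_p-t_q$ reverses sign exactly twice per revolution does not follow from any soft topological fact about the sweep; without invoking simplicity of $\gamma$ it is simply false, since nothing in your setup prevents the parameters of the hull corners from oscillating arbitrarily in the cyclic order along $\partial K$, and each local extremum of that cyclic parameter sequence would produce its own reversal. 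What rules out such oscillation is a crossing argument: two parameter-disjoint subarcs of a simple arc cannot join interlaced pairs of boundary points of the hull. The paper supplies precisely this input through its decomposition of $\gamma$ into external pieces and cross segments, showing that the external pieces outnumber the cross segments by one and alternate between the two sides of the hull determined by $X_1$ and $X_n$, so the counts on the two sides differ by at most one; that alternation is what bounds the triple-contact events by two, ties one event to $X_1$ and the other to $X_n$, and yields the interlacing. Your proposal contains no substitute for this combinatorial core.

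Beyond the missing core, three concrete slips in the sketch would need repair. First, the endpoints of $\gamma$ need not lie on $\partial K$ at all (they may be interior to the hull), so ``its two endpoints partition $\partial K$ into two paths'' fails as stated; the partition must be taken at $X_1$ and $X_n$, which are hull corners by hypothesis. Second, as $\phi$ increases both contact points advance around $\partial K$ in the \emph{same} cyclic direction, since each is the touching point of a support line whose normal rotates with $\phi$; your justification of interlacing via $p$ and $q$ ``traversing $\partial K$ in opposite cyclic directions'' is therefore wrong, and the correct mechanism is the monotone progress of the sweep between the $X_1$-event and the $X_n$-event combined with the alternation of external pieces. Third, ``exactly two sign changes'' conflates the two event types: as the paper itself observes, a middle corner $X_i$ with $1<i<n$ may also admit a vertex configuration when the hull angle there is at most $\theta$, in which case $t_p-t_q$ vanishes on an interval without necessarily reversing sign; your count must distinguish genuine reversals from such touchings of zero, which the proposal does not address.
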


The lemma originates in \cite{Wichiramala 2013} with the same result except the obvious interlace property. We note that 2 lines make angle 0 if they are parallel.
From the previous lemma, when $\theta=0$ the 2 results 1) and 2) become existing
of a unique parallel support lines with such 3 alternating points \cite{Wichiramala 2013}. When $\theta\ge\pi$,
the result is trivial and thus not interesting.
In addition, we may place some $\gamma$ within an angle so that some middle $X_i$, $1<i<n$, touches the angle's vertex. 
When $\theta$ is equal to the angle of the
hull at $X_1$, we can both place $\gamma$ within an angle of size $\theta$ so that $X_1$ touches the angle's vertex
and there is a set of 3 alternating points having $X_1$ and $X_2$ (on opposite
rays).
Recently, Movshovich extends the result in \cite{Movshovich 2019}.

The next lemma shows how to compare the length of an arbitrary arc with the chord of a related sector.

\begin{lem}\label{lem:chords}
Let $AVB$ be a sector of angle $\theta$ with chord $AB$. Suppose that the arc $\gamma$ with endpoints $P$ and $Q$ are on the rays $VA$ and $VB$, respectively (see \figref{Fig2}).
\figs{Fig2}{Length comparison between arcs and chords.}{.4}

1) If $P$ is not in the sector and $\theta<90\deg$, then $PQ>AA'$.

2) If $P$ and $Q$ are not in the sector (one of them is allowed to be on the circular arc) and $\theta<180\deg$, then $PQ>AB$.

3) If a point $X$ on $\gamma$ is not in the sector and $\theta<90\deg$, then $\gamma$ is longer than $AA'$. In addition, if $X$ is on the angle bisector, then $\gamma$ is longer than $AB$.

4) If points $X$ and $Y$ on $\gamma$ are not in the sector and they are on opposite side of the angle bisector of $\theta<180\deg$, then $\gamma$ is longer than $AB$.
\end{lem}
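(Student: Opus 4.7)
The plan is to treat the four parts using three tools: the law of cosines in triangle $VPQ$, the perpendicular-distance estimate from a point to a line, and the triangle inequality applied to $\gamma$ subdivided at the special point(s).

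For part 1, the hypothesis forces $|VP|>r$ (where $r=|VA|=|VB|$ is the sector's radius), and since $\theta<90\deg$ the foot of perpendicular from $A$ to line $VB$ gives $|AA'|=r\sin\theta$. The perpendicular distance from $P$ to line $VB$ is $|VP|\sin\theta>r\sin\theta$; since $Q$ lies on this line, $|PQ|$ is at least that perpendicular distance, giving $|PQ|>|AA'|$.

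For part 2, the law of cosines yields $|PQ|^2-|AB|^2=(|VP|^2-r^2)+(|VQ|^2-r^2)-2\cos\theta(|VP|\,|VQ|-r^2)$. Setting $|VP|=r+u$ and $|VQ|=r+v$ with $u,v\geq 0$, this rearranges to $u^2+v^2-2uv\cos\theta+2r(u+v)(1-\cos\theta)$, a sum of two manifestly nonnegative terms (the first because $u^2+v^2\geq 2uv\geq 2uv\cos\theta$, the second because $1-\cos\theta\geq 0$ when $\theta<180\deg$), and the sum vanishes only at $u=v=0$. Hence $|PQ|>|AB|$ unless $P=A$ and $Q=B$.

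For part 3, the triangle inequality applied to $\gamma$ subdivided at $X$ gives its length $\geq|PX|+|XQ|$, and since $P$ and $Q$ lie on lines $VA$ and $VB$ respectively, $|PX|$ and $|XQ|$ are bounded below by the perpendicular distances from $X$ to those lines. Placing $V$ at the origin with the angle bisector as polar axis and writing $X=(\rho\cos\varphi,\rho\sin\varphi)$, a sum-to-product identity collapses those two perpendicular distances (when $X$ lies inside the angular wedge of the sector) to $2\rho\sin(\theta/2)\cos\varphi$. Because $X$ lying outside the sector forces $\rho>r$ while $|\varphi|\leq\theta/2$ forces $\cos\varphi\geq\cos(\theta/2)$, the length of $\gamma$ exceeds $2r\sin(\theta/2)\cos(\theta/2)=r\sin\theta=|AA'|$, and sharpens to $2r\sin(\theta/2)=|AB|$ when $\varphi=0$. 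Part 4 is analogous, with both $X$ and $Y$ inserted into the subdivision: $|PX|$ and $|YQ|$ are handled as in part 3, while $|XY|$ is bounded below by the sum of the absolute distances from $X$ and $Y$ to the bisector (valid because they straddle it). A parallel sum-to-product computation then yields the lower bound $\sin(\theta/2)(\rho_X+\rho_Y)>2r\sin(\theta/2)=|AB|$.

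The main obstacle I foresee is the case in parts 3 and 4 where $X$ (or $Y$) lies outside the angular wedge of the sector rather than merely beyond the circular arc, because the direct perpendicular-distance bounds above can then underestimate the arc length. I would handle this by exploiting that $\gamma$ must cross one of the bounding rays of the wedge in order to reach such an outlying point and return to the opposite ray, inserting that crossing as an auxiliary subdivision point, and applying the perpendicular-distance estimate to each resulting subarc.
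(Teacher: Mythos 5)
Your main-line argument is correct, and despite the more analytic surface it is in substance the same proof the paper gives. The paper compresses everything into one sentence --- ``simple comparison and the inequality $\sin\alpha+\sin\beta>\sin\alpha\cos\beta+\cos\alpha\sin\beta=\sin(\alpha+\beta)$'' --- and your sum-to-product step $\sin(\theta/2-\varphi)+\sin(\theta/2+\varphi)=2\sin(\theta/2)\cos\varphi\ge 2\sin(\theta/2)\cos(\theta/2)=\sin\theta$ is exactly that inequality with $\alpha=\theta/2-\varphi$, $\beta=\theta/2+\varphi$, while your perpendicular-distance bounds are the ``simple comparison.'' Your law-of-cosines computation for part 2, with the decomposition $u^2+v^2-2uv\cos\theta+2r(u+v)(1-\cos\theta)$, is if anything sharper than the paper's appeal to comparison, since it exhibits the exact equality case $u=v=0$; and your mechanism in part 4 --- that a segment crossing the bisector has length at least the sum of the endpoints' distances to it --- is the right one (just note the two possible orders of $X$ and $Y$ along $\gamma$; both are covered because $\sin(\theta/2)\le\sin(\theta/2+\psi)+\sin\psi$, again by the same inequality).

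The one thing to repair is the contingency plan in your last paragraph, which cannot be carried out --- not for lack of ingenuity, but because if $X$ is allowed outside the angular wedge the statement itself is false. Take $P$, $Q$, and $X$ all at distance $\varepsilon$ from $V$, with $X$ just beyond the ray $VA$: then $X$ is ``not in the sector,'' yet $\gamma$ may have length $O(\varepsilon)$, far below $AA'=r\sin\theta$, and the ray-crossing point your patch would insert lies near $V$, deep inside the sector, so the subdivided perpendicular estimates yield nothing. The correct resolution is interpretive rather than technical: in Fig.~2, and in the lemma's only use in the main theorem (where the support lines confine $\gamma$ to the angle at $V$ and the hypotheses supplied are $XV>1$ and $YU>1$), ``not in the sector'' means in the wedge but beyond the circular arc, i.e.\ $\rho>r$ with $|\varphi|\le\theta/2$ --- precisely the case your main computation already covers in full. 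Once the hypothesis is read as the paper intends, your proof is complete as it stands and no patch is needed.
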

\begin{proof}
This is clear by simple comparison and the inequality $\sin\alpha+\sin\beta > \sin\alpha\cos\beta+\cos\alpha\sin\beta = \sin(\alpha+\beta)$ for $\alpha$ and $\beta$ in $(0,90\deg)$.
\qed
\end{proof}

\section{The main result}
\label{sec:3}
We prove the conjecture on the sector $\Pi$ here.

\begin{thm}
A 30$\deg$ sector of unit radius contains a congruent copy of every unit
arc.
\end{thm}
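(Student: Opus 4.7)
The plan is to reduce the problem to simple polygonal arcs via \cite[Cor.~5]{Wetzel-Wichiramala 2010}, then use the lambda property of Lemma~1 with $\theta=30\deg$ to place an arbitrary simple polygonal unit arc $\gamma$ canonically inside a $30\deg$ angular region $\angle AVB$, and finally to combine reflections across the bounding rays with the length-comparison Lemma~\ref{lem:chords} to show that such a placement must in fact lie inside the unit sector $\Pi$.

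Let $\gamma$ be a simple polygonal unit arc with convex-hull corners $X_1,\dots,X_n$ in parametric order. Lemma~1 produces a placement of $\gamma$ inside a $30\deg$ angle so that each end (the $X_1$-side and the $X_n$-side) satisfies either the \emph{vertex condition} (the extreme hull corner sits at the vertex $V$) or the \emph{three-point condition} ($\gamma$ touches the two rays at alternating parameters $t_1<t_2<t_3$). I would do a case analysis on these possibilities, leveraging the additional structural fact from Lemma~1 that at most two such placements exist and that, when both do, they interlace in a controlled way.

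For each configuration I would suppose, toward contradiction, that some point of $\gamma$ lies outside $\Pi$, that is, at distance greater than $1$ from $V$ or beyond the boundary arc $\widehat{AB}$, and try to deduce that $\gamma$ is strictly longer than $1$. In the three-point cases, parts~(2) and~(4) of Lemma~\ref{lem:chords} imply that each sub-arc between alternating support points that sags outside the sector is at least as long as a corresponding chord of $\Pi$; in vertex cases, parts~(1) and~(3) play the same role. Summing these lower bounds across both ends of $\gamma$, together with any intermediate sub-arcs, should strictly exceed $1$. The trigonometric input here is that the chord $AB$ of a unit $30\deg$ sector has length $2\sin 15\deg$, which is small enough that even the tightest configurations leave room for a contradiction.

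The main obstacle will be the configuration in which both ends of $\gamma$ are of vertex type but $\gamma$ nonetheless bulges outside $\Pi$ somewhere between the ends. Here the reflection insight emphasized in the introduction is essential: whenever a sub-arc of $\gamma$ crosses a bounding ray of the $30\deg$ angle, I would reflect that sub-arc back across the ray. This operation preserves arclength and yields a new (possibly self-intersecting) arc lying inside the $30\deg$ angle and still containing a point at distance greater than $1$ from $V$, so Lemma~\ref{lem:chords} applies directly to the reflected arc and forces the desired length contradiction. Verifying that the interlacing clause of Lemma~1 prevents any double-counting in these chord estimates, and carefully treating the mixed vertex/three-point cases, is the technical heart of the argument.
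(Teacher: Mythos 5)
Your skeleton---reduce to simple polygonal arcs by \cite[Cor.~5]{Wetzel-Wichiramala 2010}, apply Lemma~1 with $\theta=30^\circ$, and derive a contradiction of the form ``length $>1$'' via reflections and Lemma~\ref{lem:chords}---matches the paper's, but you locate the difficulty in the wrong place and the step that actually carries the proof is missing. First, the vertex-type alternative is not an obstacle at all: if a hull corner of $\gamma$ sits at the apex of a $30^\circ$ angle containing $\gamma$, then every point of $\gamma$ is at arc-length, hence Euclidean, distance at most $1$ from that corner, so $\gamma$ already lies in the unit sector $\Pi$. There is no configuration with ``both ends of vertex type but $\gamma$ bulging outside $\Pi$''; under the contradiction hypothesis the vertex alternative is excluded at \emph{both} ends, so both clauses of Lemma~1 must return three-point configurations, giving two interlacing pairs of support lines with apices $U$ and $V$ and, because $\gamma$ still fails to fit, points $X,Y$ on $\gamma$ with $XV>1$ and $YU>1$. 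Your fold-in reflection (reflecting sub-arcs back across a bounding ray) would not rescue that vacuous case anyway: an arc inside a $30^\circ$ wedge with one point at distance $>1$ from the apex need not be long, and the parts of Lemma~\ref{lem:chords} you invoke presuppose endpoints lying on the two rays, which the folded arc need not have.

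Second, your quantitative plan fails where the real work lies. You propose to sum chord lower bounds over sub-arcs that sag outside $\Pi$, with the chord $AB=2\sin 15^\circ\approx 0.518$ of the $30^\circ$ sector as the basic unit; but the sub-arcs between alternating touch points do not individually dominate such chords, and no accounting is offered that pushes the total past $1$. The paper's key idea, absent from your sketch, is to \emph{unfold} rather than fold in: since two reflections in the sides of a $30^\circ$ angle compose to a $60^\circ$ rotation, one reflects the pieces of the touch polyline $AXBCDEYF$ successively across the support lines so that its length dominates the distance between two points lying outside a sector of angle at least $60^\circ$ and radius greater than $1$ about a single center, and by Lemma~\ref{lem:chords} that distance exceeds the chord $2\sin 30^\circ = 1$ exactly---this is the arithmetic that makes $30^\circ$ the right angle. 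Carrying it out requires the interlacing structure of the two triples (the paper's four cases, including the shared-line case and the degenerate coincidences $B=C$, $D=E$), a separate reduction moving an endpoint-side touch point (the $\bar X$, $\bar A$ argument) so that $X$ lies between $A$ and $B$, and angle verifications such as $\angle Y'''V'V > \angle Y^*V'V$ and $60^\circ+\alpha-\beta\ge 60^\circ$ confirming that the unfolded endpoints really straddle the enlarged sector. Without the unfolding and these case-by-case checks, your argument does not close.
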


\begin{proof}
It suffices to show that every simple, polygonal unit
arc fits in $\Pi$. Suppose to the contrary that $\gamma$ is a simple,
polygonal unit arc that cannot be covered by $\Pi$ (see Corollary 5 in \cite{Wetzel-Wichiramala 2010}).
Hence $\gamma$ cannot be placed within $\Pi$ so that it touches the angle's vertex of the sector of unit radius.
By Lemma 1 for angle $\theta=30\deg$, there are 2 different pairs of support lines with angle in between $30\deg$ together with 2 sets of 3 alternating points on each pair of such lines. Some of these 6 points may coincide. Let $U$ and $V$ be the intersections of the 2 pairs of support lines
(see \figref{Fig3}).
\figs{Fig3}{The 2 angles at vertices $U$ and $V$ of size $30\deg$.}{.4}
Hence none of the 6 points touches $U$ or $V$,
and there are points $X$ and $Y$ on $\gamma$ that $XV>1$ and $YU>1$.

We will divide into cases according to relations between the 2 pairs of lines and the 2 sets of points.

Case 1. The 2 pairs of lines share a common line with 2 sets of points $ABD$ and $ACD$ as in \figref{Fig4}.
\figs{Fig4}{Case 1 with all possibilities}{.25}
Note that $B$ and $C$ may coincide as a degenerate subcase. This case already includes the last 3 possibilities in \figref{Fig4} as we may ignore some points.

For the remaining cases, the 4 lines are different. 
The arrangement of those 6 points will determine the remaining cases.

Case 2. The 2 sets of points are $ABD$ and $CEF$ as in \figref{Fig5}(a).
\figs{Fig5}{(a) Case 2. (b) Case 3. (c) Case 4.}{.25}
Note that $BC$ or $DE$ may degenerate.

Case 3. The 2 sets of points are $ABC$ and $DEF$ as in \figref{Fig5}(b).
Note that $CD$ may degenerate.

Case 4. The 2 sets of points are $ABC$ and $DEF$ as in \figref{Fig5}(c).

Let $P$ be the endpoint of $\gamma$ near $A$ and $Q$ be the other endpoint.
Now we will show in each case that $\gamma$ is longer than 1.



Case 1. Here we have that $X$ is on the subarc $PB$ and $Y$ is on the subarc $CQ$.
First suppose that $X$ is on the subarc $AB$ and $Y$ is on the subarc $CD$ (see \figref{Fig6}).
\figs{Fig6}{Case 1.}{.3}
Note that $X$ may coincide with $A$ or $B$ and $Y$ may coincide with $C$ or $D$. We will show that the polysegment $AXBCYD$ is longer than 1.
First we reflect $A$, $X$ and $V$ across $UB$ to $A'$, $X'$ and $V'$. Then we reflect $Y$, $D$ and $U$ across $VC$ to $Y'$, $D'$ and $V'$. Since $X'V'=XV>1$ and $V'Y'=UY>1$, by \lemref{chords}, The length $AXBCYD=A'X'BCY'D'$ is greater than the chord of a unit sector of angle $60\deg$ at $V'$. Therefore $\gamma$ is longer than 1.

Now suppose that $X$ is on the subarc $PA$ (and $Y$ is on the subarc $EF$). We will compare the polygonal arc $XABCDEYF$ with a shorter polygonal arc that satisfies the previous assumption. First let $\bar{X}$ be the point on the left of $A$ with $\bar{X}A=XA$ (see \figref{Fig7}).
\figs{Fig7}{$X$ is on the subarc $PA$. ($\bar{X}A=XA$)}{.3}
Since $A$ is on the left of $V$, the center of the circular arc, $\bar{X}$ is also on the left of the circular arc. Thus $\bar{X}V>1$. Now let $\bar{A}=\bar{X}$. Thus the polygonal arc $\bar{A}\bar{X}BCDEYF$ is shorter than $XABCDEYF$. By the first part of case 1, we have $\bar{A}\bar{X}BCDEYF>1$.
When $Y$ is on the subarc $FQ$, the argument is similar. Hence $\gamma$ is longer than 1.

For the remaining cases, with the same argument on $\bar{X}$ and $\bar{A}$, we may suppose that $X$ is on the subarc $AB$ and $Y$ is on the subarc $EF$. Then we will show that the polysegment $AXBCDEYF$ is longer than 1.

\figs{Fig8}{Case 2}{.3}
Case 2.
Let $\alpha = \a AVU$ and $\beta = \a FUV$ (see \figref{Fig8}). Possibly after a rotation, we may assume $\alpha \ge \beta$.
Note that the product of 2 reflections in the sides of a \30 angle is a \60 rotation about their intersection.
We create new points and segments as follows.
First let $C'D'E'Y'F'$ be the reflection of $CDEYF$ across $UB$.
Next let $E''Y''F''$ be the reflection of $E'Y'F'$ across $UD'$.
Now let $V'$ be the rotation of $V$ for \60 around $U$.
Then let $Y'''F'''$ be the reflection of $Y''F''$ across $V'E''$ and $Y^*$ be the reflection of $Y'''$ across $V'F'''$.
Note that $V$ is also the rotation of $U$ for \60 around $V'$.

Since $\a F'''V'V = \a FVU$, it is in $(0,30\deg)$. We then have
$\a Y'''V'V = \a Y'''V'F''' + \a F'''V'V > \a Y'''V'F''' - \a F'''V'V = \a Y^*V'F''' - \a F'''V'V = \a Y^*V'V$. 
Hence $V$ and $Y^*$ are on the same side of $V'F'''$. 
Thus $Y'''V>Y^*V=YU>1$. 
Together with $XV>1$ and $\a F'''VV' = \a FUV=\beta$. By \lemref{chords}, the length of $AXBC'D'E''Y'''F'''$ is at least $AF'''$ which is longer than a chord of a unit sector around $V$ of angle $60\deg+\alpha-\beta \ge 60\deg$. Therefore the length of $AXBCDEYF$ is greater than 1. In degenerate case where $B=C$ or $D=E$, the argument is similar.

\figs{Fig9}{Case 3}{.3}
Case 3.
We create new points and segments as follows (see \figref{Fig9}).
First let $X'B'$ be the reflection of $XB$ across $UA$ and $A''X''$ be the reflection of $AX'$ across $UB'$.
Next let $V'$ be the rotation of $V$ around $U$ for $-60\deg$.
Now let $E'Y'$ be the reflection of $EY$ across $VF$ and $Y''F''$ be the reflection of $Y'F$ across $VE'$.
Note that $V'$ is also the rotation of $U$ around $V$ for \60.

Note that $X''V'=XV>1$ and $Y''V'=YU>1$. By \lemref{chords}, the length of $A''X''B'CDE'Y''F''$ is at least $A''F''$ which is longer than a chord of a unit sector around $V'$ of angle at least \60. Therefore the length of $AXBCDEYF$ is greater than 1.

\figs{Fig10}{Case 4}{.3}
Case 4.
Similar to case 2, let $\alpha = \a AVU$ and $\beta = \a FUV$ (see \figref{Fig10}). Possibly after a rotation, we may assume $\alpha \ge \beta$.
We create new points and segments as follows.
First let $C'D'E'Y'F'$ be the reflection of $CDEYF$ across $UB$.
Next let  $D''E''Y''F''$ be the reflection of $D'E'Y'F'$ across $UC'$.
Now let $V'$ be the rotation of $V$ around $U$ for \60.
Then let $Y'''F'''$ be the reflection of $Y''F''$ across $V'E''$ and $Y^*$ be the reflection of $Y'''$ across $V'F'''$.
Note that $V$ is also the rotation of $U$ around $V'$ for \60.

Since $\a Y'''V'V > \a Y^*V'V$, we have $Y'''V>Y^*V=YU>1$. 
Together with $XV>1$ and $\a F'''VV' = \a FUV=\beta$, by \lemref{chords}, the length of $AXBC'D''E''Y'''F'''$ is at least $AF'''$ which is longer than a chord of a unit sector around $V$ of angle $60\deg+\alpha-\beta \ge 60\deg$. Therefore the length of $AXBCDEYF$ is greater than 1.

In each case, we arrive at a contradiction. Therefore $\Pi$ can cover every unit arc.
\qed
\end{proof}

Now we provide the proof of Lemma 1.

\begin{proof}
We first consider the angle of the convex hull of $\gamma$ at $X_1$. If the
angle is not bigger than $\theta$, then we can place $\gamma$ within it so
that $X_1$  touches the angle's vertex. So we suppose the angle is bigger
than $\theta$.
\figs{Fig1}{
(a) The 2 rays $u$ and $v$ start at $X_1$.
(b) The ray $v$ rotates around $\gamma$.
(c) The ray $v$ touches 2 external pieces.
(d) The ray $u$ rotates around $\gamma$.
(e) The ray $u$ touches 2 external pieces.
}{.3}
Note that for each $i$, the segment $X_iX_{i+1}$ is a part of the boundary
of the hull or cross segment (inside the interior of) the hull. Without the
interior
of all cross segments, the remaining of the polygonal arc $X_1 X_2\ldots
X_n$ is composed of connected,
external
pieces on the boundary. Note that an external piece could possibly be just a single
point. However the beginning and ending pieces are not a single point.
Next we place 2 rays $u$ and $v$ starting at $X_1$ and on the 2 sides of
the hull as in \figref{Fig1}(a).
As $X_1$ is the first point on the boundary, we may assume that $v$ touches
$X_2$
and $u$ touches 2 external pieces or 2 ends of the single external piece.
We now start to rotate $v$ around and keep touching $\gamma$ as in \figref{Fig1}(b).
We continue to rotate until $v$ touches 2 external pieces as in \figref{Fig1}(c).
Next we start to rotate $u$ around $\gamma$ as in \figref{Fig1}(d).
We continue to rotate until $u$ touches 2 external pieces as in \figref{Fig1}(e).
Along this rotation, we keep the property of having 3 points alternating
between $u$ and $v$ and reduce the angle between  $u$ and $v$. The process
stops when the angle between $u$ and $v$ is $\theta$.

For result 2), we work in the same fashion at $X_{n}$ and get another pair
of support lines with alternating 3 points. Since $\theta>0$, if there are
2 pairs of support lines, they are different as the 2 orientated angles between
$u$'s and $v$'s are different. In the other way, we may think that we continue
moving $u$ and $v$ starting from $X_{1}$ and decreasing the angle in between
until $u$ and $v$ meet at $X_{n}$. When the signed angle in between is $-\theta$,
we have a pair of support line for 2). We have that the number of external
pieces
is greater than the number of cross segments by 1 and each
external piece alternately contribute to opposite side of the hull with respect
to
$X_1$ and $X_n$. Hence the numbers of external
pieces on both sides 
differ by at most 1. Thus the process must stop at $X_n$ and if there are
2 such pairs of support lines, they must interlace as in \figref{Fig4} or
\figref{Fig5}.

For a pair of support lines with this property, one line must touch 2 consecutive
external
pieces while the other line touches the external piece ordered in between
the 2 previous and opposite pieces. Hence the process has found all possible
such pairs.
This completes the proof of the lemma   .
\qed
\end{proof}

\begin{acknowledgements}
We are grateful to John E. Wetzel for providing the introductory material and the associated references,
 invaluable comments and guidance. Communication with Yevgenya Movshovich and her comments are very appreciated. We also appreciate the help from Banyat Sroysang and Pongbunthit Tonpho. This study is partially supported by the 90th Anniversary of Chulalongkorn University Fund (Ratchadaphiseksomphot Endowment Fund).
\end{acknowledgements}



\end{document}